\newtheorem{thm}[equation]{\textbf{Theorem}}
\newtheorem{lem}[equation]{\textbf{Lemma}}
\newtheorem{pro}[equation]{\textbf{Proposition}}
\newtheorem{rem}[equation]{\textbf{Remark}}
\newtheorem{cor}[equation]{\textbf{Corollary}}
\newtheorem{defn}[equation]{\textbf{Definition}}
\newtheorem{hyp}[equation]{\textbf{Hypothesis}}
\numberwithin{equation}{section}
\title{Bounded Weak Solutions of Degenerate $p$-Poisson Equations}
\author{Sullivan F. MacDonald and Scott Rodney }
\date{July 2023}
\thanks{S. Rodney was supported by the NSERC Discovery Grant Program. S.F. MacDonald was supported by the NSERC USRA Program.}
\address{Scott Rodney\\
Dept. of Mathematics, Physics and Geology \\ 
Cape Breton University \\
Sydney, NS B1Y3V3, CA} 
\email{scott\_rodney@cbu.ca}
\address{Sullivan F. MacDonald, MSc student in the Dept. of Mathematics \& Statistics\\ 
McMaster University \\
Hamilton, ON L8S4L8, CA}
\email{macdos55@mcmaster.ca}
\begin{document}

\begin{abstract}
In this work we study global boundedness and exponential integrability of weak solutions to degenerate $p$-Poisson equations using an iterative method of De Giorgi type.  Given a symmetric, non-negative definite matrix valued function $Q$ defined on a bounded domain $\Omega\Subset\mathbb{R}^n$, a weight function $v\in L^1_\textrm{loc}(\Omega,dx)$, and a suitable non-negative function $\tau$, we give sufficient conditions for any weak solution to the Dirichlet problem
\begin{align*}
    \left\{\begin{array}{rccl}
    -\displaystyle\frac{1}{v}\mathrm{{div}}\left(\left|\sqrt{Q}\nabla u\right|^{p-2}Q\nabla u\right)+\tau\left|u\right|^{p-2}u&=&f&\textrm{in }\Omega,\\
    u&= & 0&\textrm{on }\partial\Omega
    \end{array}\right.
\end{align*}
to be bounded and exponentially integrable when the data function $f$ belongs to an appropriate Orlicz space. 
\end{abstract}

\keywords{Orlicz Spaces, Degenerate Elliptic PDE, Global Regularity}
\subjclass{28A25, 35B45, 35D30, 35J25, 46E30, 47H99.}

\maketitle

\section{Introduction}

In a bounded domain $\Omega\Subset\mathbb{R}^n$, the authors of \cite{DCU-SR20} investigated \emph{a priori} boundedness of weak solutions to a class of Dirichlet problems for linear second order operators that includes
\begin{align*}
\left\{\begin{array}{rcl}
-\Delta u(x) &=& f(x)\textrm{ for }x\in \Omega\\
u(x)&=&0~~~\textrm{ for }x\in\partial\Omega,
\end{array}
\right.
\end{align*}
where the data function $f$ is permitted to be in the Orlicz class $L^A(\Omega,dx)$ with $A(t) = t^{n/2}\log(e-1+t)^{q}$ for $q>0$.  The main result of that paper shows that a weak solution to such a problem is bounded on $\Omega$ whenever $q>n/2$.   This paper continues the study of such problems.  Indeed, we investigate \emph{a priori} boundedness and exponential integrability of weak solutions to Dirichlet problems for operators of $p$-Laplace type with rough coefficients.  As in \cite{DCU-SR20}, the aim of this note is to explore optimal conditions on data functions that ensure regularity of the corresponding weak solution.  Our approach uses De Giorgi type iterative techniques to prove our results, and they and are modelled on the techniques used in \cite{DCU-SR20}. To effectively state the problems we study, we first describe the relevant class of operators.

Letting $S_n$ denote the collection of $n\times n$ symmetric, non-negative definite matrices, we let $Q :\Omega\rightarrow S_n$ be a matrix-valued function with real measurable entries. The pointwise operator norm of $Q$ is henceforth denoted by \(\omega:\Omega\rightarrow\mathbb{R}\). That is, for \(x\in\Omega\) we write \(\omega(x) =\|Q(x)\|_{\textrm{op}} =\sup_{|\xi|=1}\left|Q(x)\xi\right|\).  For $p>1$, let $v$ be a weight function in $\Omega$ that satisfies
\begin{align}\label{weightcond}v(x)\geq C\omega^{p/2}(x),\textrm{ with}~\omega\in L^{p/2}_\textrm{loc}(\Omega,dx).
\end{align}
More, we will always assume that the matrix function $Q$ satisfies a Sobolev inequality with gain $\sigma>1$. In particular, we assume there is are constants $C>0$ and $\sigma>1$ such that
\begin{equation}\label{sobolev}
    \left(\int_\Omega \left|\varphi \right|^{p\sigma}~vdx\right)^{1/p\sigma}\leq C\left(\int_\Omega |\sqrt{Q}\nabla \varphi |^p~dx\right)^{1/p}
\end{equation}
holds for any $\varphi\in \mathrm{Lip}_0(\Omega)$. The dual exponent of $\sigma$ plays a significant role in our results, and we denote it by $\sigma'$ at all times.

Given $\tau:\Omega\rightarrow [0,\infty]$, we consider Dirichlet problems of the form
\begin{align}\label{mainprob}
    \left\{\begin{array}{rccl}
    -\displaystyle\frac{1}{v}\mathrm{{div}} (| \sqrt{Q}\nabla u|^{p-2}Q\nabla u) + \tau\left|u\right|^{p-2}u&=&f&\textrm{in }\Omega\\
    u&= & 0&\textrm{on }\partial\Omega,
    \end{array}\right.
\end{align}
where the data function $f$ is in a $v$-weighted Orlicz class.  Our main results are the following two theorems.

\begin{thm}\label{main} Let $1< p<\infty$ and $\sigma>1$ be as in \eqref{sobolev}, and assume that $v(\Omega)<\infty$. Let $\tau\in L^{p\sigma'}(\Omega,v)$ be a non-negative, real valued function on $\Omega$, and let $f\in L^\Gamma(\Omega,v)$ for a Young function $\Gamma(t)$ for which \(t^{-\sigma'}\Gamma(t)\) is increasing on $[0,\infty)$ and 
\begin{align}\label{datacond}
    \int_1^\infty \frac{\Gamma'(t)}{\Gamma(t)}\left(\frac{t}{\Gamma(t)^\frac{1}{\sigma'}}\right)^\frac{1}{p-1}~dt<\infty.
\end{align}
Then, any weak sub-solution $(u,\nabla u)\in QH^{1,p}_0(\Omega,v)$ of \eqref{mainprob} is bounded satisfying
\[
    \|u\|_{L^\infty(\Omega,v)}\leq C\|f\|_{L^\Gamma(\Omega,v)}^\frac{1}{p-1}
\]
for a constant $C$ independent of both $(u,\nabla u)$ and $f$.
\end{thm}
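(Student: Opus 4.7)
The plan is a De Giorgi iteration on super-level sets of $u$, with weighted-Orlicz H\"older inequalities replacing the usual $L^q$ estimate on the data. Fix $k>0$ and test the weak sub-solution inequality for \eqref{mainprob} against the non-negative function $(u-k)_+$. On the set $A(k):=\{u>k\}$ we have $u>k\geq 0$ and $\tau\geq 0$, so the zeroth-order contribution $\int \tau|u|^{p-2}u(u-k)_+\,v\,dx$ is non-negative and can be dropped. Since $\nabla(u-k)_+=\chi_{A(k)}\nabla u$, the principal term collapses to $\int_{A(k)}|\sqrt{Q}\nabla u|^p\,dx$, and the weak inequality reduces to
\begin{equation*}
\int_\Omega |\sqrt{Q}\nabla(u-k)_+|^p\,dx \;\leq\; \int_{A(k)} f\,(u-k)_+\,v\,dx.
\end{equation*}
Applying the Sobolev inequality \eqref{sobolev} to $(u-k)_+$ (after a standard Lipschitz approximation) and raising to the $p$-th power yields
\begin{equation*}
\|(u-k)_+\|_{L^{p\sigma}(\Omega,v)}^{p} \;\leq\; C\int_{A(k)} f\,(u-k)_+\,v\,dx.
\end{equation*}

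To estimate the right side I would invoke the weighted Orlicz H\"older inequality
\begin{equation*}
\int_{A(k)} f(u-k)_+\,v\,dx \;\leq\; 2\,\|f\|_{L^\Gamma(\Omega,v)}\,\|(u-k)_+\|_{L^{\tilde\Gamma}(A(k),v)},
\end{equation*}
where $\tilde\Gamma$ denotes the Young conjugate of $\Gamma$. The hypothesis that $t^{-\sigma'}\Gamma(t)$ is increasing dualizes to $t^{-\sigma}\tilde\Gamma(t)$ being essentially decreasing, which together with $v(\Omega)<\infty$ permits the domination of $\|(u-k)_+\|_{L^{\tilde\Gamma}(A(k),v)}$ by $\|(u-k)_+\|_{L^{p\sigma}(A(k),v)}$ times a factor depending on $V(k):=v(A(k))$ through $\tilde\Gamma^{-1}(1/V(k))$. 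Chaining this with the Chebyshev bound $(h-k)^{p\sigma}V(h)\leq \|(u-k)_+\|_{L^{p\sigma}(\Omega,v)}^{p\sigma}$ for $h>k$ produces a scalar recursion of the form $V(h)\leq G\bigl(V(k),\,h-k,\,\|f\|_{L^\Gamma(\Omega,v)}\bigr)$.

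The final stage is to iterate this recursion along a sequence $0=k_0<k_1<\cdots$ chosen so that $V_j:=V(k_j)$ decays geometrically. Solving the recursion for the step sizes $h_j:=k_{j+1}-k_j$ and summing gives a finite limit $K=\lim k_j$; a change of variable $t=\Gamma^{-1}(1/V_j)^{1/\sigma'}$ identifies the resulting telescoping sum with the integral in \eqref{datacond} multiplied by $\|f\|_{L^\Gamma(\Omega,v)}^{1/(p-1)}$, which is finite by hypothesis. The iteration forces $V_j\to 0$, so that $u\leq K$ holds $v$-a.e.\ on $\Omega$, with $K\leq C\|f\|_{L^\Gamma(\Omega,v)}^{1/(p-1)}$ as claimed. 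The principal obstacle is the accurate translation of the Orlicz H\"older step into a scalar recursion that exhibits the exponent $1/(p-1)$ precisely; secondary concerns are the Lipschitz approximation needed to apply \eqref{sobolev} to $(u-k)_+$, and tuning the decay rate of $V_j$ so that the step-size series reproduces \eqref{datacond} rather than merely a sufficient variant of it.
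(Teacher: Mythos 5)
Your proposal follows essentially the same De Giorgi-type argument as the paper: test with $(u-k)_+$, drop the nonnegative $\tau$ term, apply \eqref{sobolev}, use the Orlicz H\"older inequality together with the indicator-norm formula and conjugate equivalence to bound the conjugate norm by the $L^{p\sigma}$ norm times a function of $V(k)$, derive a superlinear recursion in the level-set measures, and iterate with geometric decay so that the summed step sizes become, after a change of variables, precisely the integral in \eqref{datacond} scaled by $\|f\|_{L^\Gamma(\Omega,v)}^{1/(p-1)}$. The only (immaterial) deviation is that you apply Chebyshev directly in $L^{p\sigma}$ instead of the paper's Chebyshev in $L^1$ followed by H\"older; both routes force the same step-size formula $k_{j+1}-k_j \sim \|f\|_{L^\Gamma(\Omega,v)}^{1/(p-1)}V_j^{1/((p-1)\sigma')}\Gamma^{-1}(V_j^{-1})^{1/(p-1)}$ and hence the same convergent series.
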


\begin{thm}\label{expint} Let \(f\in L^{\sigma'}(\Omega,v)\), and let \(u\in QH_0^{1,p}(\Omega,v)\) be a bounded weak solution to \eqref{mainprob} with $\tau=0$. There is a constant $\gamma>0$ and a constant \(C>0\) such that
\[
    \int_\Omega e^{\gamma u}v dx\leq Cv(\Omega).
\]
We note that both $\gamma$ and $C$ are independent of the weak solution $(u,\nabla u)$.  
\end{thm}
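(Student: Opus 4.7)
The plan is a single-step Moser-type argument with an exponential test function, followed by absorption of a nonlinear remainder. Since $u$ is a bounded element of $QH_0^{1,p}(\Omega,v)$, for every $\lambda>0$ both $\varphi_\lambda:=e^{\lambda u}-1$ and $\Phi_\lambda:=e^{\lambda u/p}-1$ lie in $QH_0^{1,p}(\Omega,v)$; verifying this admissibility, by truncation and a chain-rule limit in the weighted degenerate setting, is a preliminary technicality. Inserting $\varphi_\lambda$ into the weak formulation of \eqref{mainprob} with $\tau=0$ and using the chain-rule identity $|\sqrt{Q}\nabla\Phi_\lambda|^p = (\lambda/p)^p\, e^{\lambda u}|\sqrt{Q}\nabla u|^p$, the PDE rewrites as
$$\int_\Omega |\sqrt{Q}\nabla\Phi_\lambda|^p\,dx = \frac{\lambda^{p-1}}{p^p}\int_\Omega f\,(e^{\lambda u}-1)\,v\,dx.$$

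The Sobolev inequality \eqref{sobolev}, raised to the $p$-th power, bounds the left-hand side from below by $C^{-p}\|\Phi_\lambda\|_{L^{p\sigma}(\Omega,v)}^p$. For the right-hand side I will combine the elementary pointwise estimate
$$\bigl|e^{\lambda u}-1\bigr| = \bigl|(1+\Phi_\lambda)^p-1\bigr| \leq C_p\bigl(|\Phi_\lambda|+|\Phi_\lambda|^p\bigr),$$
H\"older's inequality on $L^{\sigma'}(\Omega,v)\times L^\sigma(\Omega,v)$, and the H\"older-type comparison $\|\Phi_\lambda\|_{L^\sigma(\Omega,v)} \leq v(\Omega)^{(p-1)/(p\sigma)}\|\Phi_\lambda\|_{L^{p\sigma}(\Omega,v)}$ that relies on $v(\Omega)<\infty$. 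Writing $X:=\|\Phi_\lambda\|_{L^{p\sigma}(\Omega,v)}$, these estimates combine into
$$X^p \leq K\lambda^{p-1}\|f\|_{L^{\sigma'}(\Omega,v)}\bigl(v(\Omega)^{(p-1)/(p\sigma)}\,X + X^p\bigr),$$
where $K$ depends only on $p$, $\sigma$ and the Sobolev constant.

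The final step is to choose $\lambda=\lambda_0$ so that $K\lambda_0^{p-1}\|f\|_{L^{\sigma'}(\Omega,v)} \leq \tfrac12$. Because $u\in L^\infty(\Omega)$, the quantity $X$ is \emph{a priori} finite, so the $X^p$ term on the right may be absorbed to give $X^{p-1} \leq C\,v(\Omega)^{(p-1)/(p\sigma)}$; equivalently, $\int_\Omega(e^{\lambda_0 u/p}-1)^{p\sigma}\,v\,dx \leq C v(\Omega)$. Using $(1+t)^{p\sigma} \leq 2^{p\sigma}(1+t^{p\sigma})$ for $t\geq 0$ on $\{u\geq 0\}$ together with the trivial bound $e^{\lambda_0\sigma u}\leq 1$ on $\{u<0\}$ then yields $\int_\Omega e^{\gamma u}\,v\,dx \leq C v(\Omega)$ with $\gamma:=\lambda_0\sigma$. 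Both $\gamma$ and $C$ depend only on $p$, $\sigma$, $v(\Omega)$, $\|f\|_{L^{\sigma'}(\Omega,v)}$ and the Sobolev constant, and hence are independent of $(u,\nabla u)$.

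The main obstacle is the absorption step: the coefficient of $X^p$ must be made strictly less than $1$, and $X$ itself must be known \emph{a priori} to be finite, which is precisely where the hypothesis that $u$ is a \emph{bounded} weak solution enters in an essential way. A related point is that the resulting $\lambda_0$, and hence $\gamma$, must shrink as $\|f\|_{L^{\sigma'}(\Omega,v)}$ grows; this reflects the fact that $L^{\sigma'}(\Omega,v)$ is the borderline Lebesgue class for which the $L^\infty$ bound provided by Theorem \ref{main} is no longer available, so exponential integrability is the sharp conclusion accessible to this one-step method.
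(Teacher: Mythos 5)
Your argument is correct and is essentially the paper's own: an exponential test function $e^{\lambda u}-1$, the chain-rule identity turning the energy into $\|\sqrt{Q}\nabla(e^{\lambda u/p}-1)\|_{L^p(\Omega,dx)}^p$, the Sobolev inequality \eqref{sobolev}, H\"older against $\|f\|_{L^{\sigma'}(\Omega,v)}$, and absorption for $\lambda\lesssim \|f\|_{L^{\sigma'}(\Omega,v)}^{-1/(p-1)}$ using the a priori finiteness coming from boundedness of $u$. The only differences are cosmetic: the paper runs the same estimate for the truncations $(e^{\xi u}-1-\alpha)_+$ and lets $\alpha\to 0^+$, using Minkowski where you use the pointwise bound $|(1+t)^p-1|\le C_p(|t|+|t|^p)$ together with the $L^{\sigma}$--$L^{p\sigma}$ comparison from $v(\Omega)<\infty$.
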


\begin{rem}\label{introremark} 
\begin{itemize}[align=left,leftmargin=*,itemindent=0pt,labelsep=-5pt]
    \item[(1)]\;\;\;The existence of weak solutions to \eqref{mainprob} has been studied extensively; see for example, \cite{CRR2,CRR,HLWK,MRexist,SRexist}.  Weak solutions to problems similar to \eqref{mainprob} exist in the degenerate matrix weighted Sobolev spaces defined as closures of Lipschitz functions with respect to the Sobolev norm as described in \S2.2. Notation differs across these works; note for example that  $QH^{1,p}_0(\Omega,v)$ as in Theorem \ref{main} is described in Chapter 4 of \cite{CRR}, where the authors use the notation $\hat{H}^{1,p}_{Q,0}(\Omega)$.
    \item[(2)]\;\;\;Elements of the matrix weighted Sobolev space $QH^{1,p}_0(\Omega,v)$ are pairs of the form $(u,\vec{g}) \in L^p(\Omega,v)\times \mathcal{L}^p(\Omega,Q)$, where $\mathcal{L}^p(\Omega,Q)$ is the Banach space of $\mathbb{R}^n$-valued measurable functions as described in \cite{CRR2}. The function $\vec{g}$ is a generalized derivative of $u$ in the sense that there exists a sequence $\{u_j\}$ of Lipschitz functions with compact support in $\Omega$ such that
    \[
        \lim_{j\rightarrow\infty} \|u- u_j\|_{L^p(\Omega,v)} = 0\quad\textrm{and}\quad\lim_{j\rightarrow\infty} \|\sqrt{Q}(\vec{g} - \nabla u_j)\|_{\mathcal{L}^p(\Omega,Q)} = 0.
    \]
    We note that completeness of the matrix weighted space $\mathcal{L}^p(\Omega,Q)$ was first established for the case $p=2$ in \cite{SW2}, and for general $p\in[1,\infty)$ by the authors of \cite{CRR}.
    \item[(3)]\;\;\;A classical result of Gilbarg and Trudinger \cite[Thm. 8.16]{gt} is recovered in the case \(p=2\) by setting $Q = I$, taking $\sigma=\frac{n}{n-2}$, and using $\Gamma=t^q$ for $q>n/2$.  We note that \eqref{datacond} holds exactly when the well-known condition $q>\frac{n}{2}$ is satisfied.
\end{itemize}
\end{rem}

Theorem \ref{main}  generalizes \cite[Thm. 1.3]{DCU-SR20} in several ways. First, \cite[Thm. 1.3]{DCU-SR20} is a special case of Theorem \ref{main} obtained by taking $p=2$, $\tau=0$, with Young function $\Gamma(t)=t^{\sigma'}\log(e-1+t)^q$ for $q>\sigma'$ with $\sigma>1$. Condition \eqref{datacond} is satisfied since
\[
    \int_1^\infty  \frac{t\Gamma'(t)}{\Gamma(t)^{1+\frac{1}{\sigma'}}} \leq (q+\sigma')\int_{1}^\infty \frac{1}{t\log(e-1+t)^\frac{q}{\sigma'}} ~dt,
\]
and the integral on the right-hand side converges precisely when \(q>\sigma'\). Second, our method can accommodate the presence of the first-order drift term $\tau |u|^{p-2}u$ in \eqref{mainprob}.  This feature is useful and related to the existence of Sobolev and Poincar\'e inequalities, see \cite{CRR2,CRR}.  Lastly, we do not assume a special form of the Young function $\Gamma$ in replacing  \(\Gamma(t)=t^{\sigma'}\log(e-1+t)^q\) with any Young function satisfying  \eqref{datacond}. For example, our results are easily shown to apply when $\Gamma(t)$ is an iterated log bump Young function of the form
\begin{equation}\label{iterated}
    \Gamma(t)=\bigg(t\prod_{j=1}^{k-1}\underbrace{\log\cdots\log}_{j\;\mathrm{times}}(c_j+t)\bigg)^{\sigma'}\underbrace{\log\cdots\log}_{k\;\mathrm{times}}(c_k+t)^q,
\end{equation}
where $q>\sigma'$ and \(c_1,\dots,c_k\) are chosen independent of \(q\) and \(\sigma\) so that \(\Gamma(1)=1\).

The plan for the rest of this paper is as follows. In Section \ref{prelims} we recall the relevant notions of Orlicz spaces and weak solutions, and we discuss special properties of a family of test functions specially related to our iterative technique. Section \ref{mainproof} is devoted to the proof of Theorem \ref{main}. Following this, Section \ref{expintsec} discusses exponential integrability of weak solutions to \eqref{mainprob} and contains the proof of Theorem \ref{expint}.

\section{Preliminaries}\label{prelims}

This section provides a brief overview of the theory of Orlicz spaces needed for our results. Comprehensive treatments of this theory can be found in many of the works cited in this section.

\subsection{Orlicz Spaces}\; First we introduce the Young functions which induce Orlicz spaces. Such functions are commonly defined with respect to a density, which is any non-decreasing and right-continuous function \(\psi:[0,\infty)\rightarrow[0,\infty)\) which satisfies \(\psi(t)=0\) if and only if \(t=0\), and \(\psi(t)\rightarrow\infty\) as \(t\rightarrow\infty\). Given a density, the associated Young function \(\Psi\) is defined for \(t\geq 0\) by setting
\[
    \Psi(t) = \int_0^t\psi(s)ds.
\]
From this formulation, one sees that \(\Psi\) is continuous, strictly increasing, and convex on \((0,\infty)\). Moreover, it is clear that \(\Psi(0)=0\) and \(\Psi(t)\rightarrow\infty\) as \(t\rightarrow\infty\).

Equipped with a Young function $\Psi$ and a non-negative function $v\in L^1_\textrm{loc}(\Omega,dx)$ which we call a weight, one defines the Orlicz space $L^\Psi(\Omega,v)$ associated with a domain $\Omega\subset\mathbb{R}^n$ to be the collection of real-valued functions on \(\Omega\) for which the norm
\begin{equation}\label{lux} 
    \|f\|_{L^{\Psi}(\Omega,v)} = \inf\left\{\lambda>0~:~ \int_\Omega \Psi\left( \frac{|f(t)|}{\lambda}\right)~v(t)dt \leq 1\right\}
\end{equation}
is well-defined and finite. It is well-known that $L^\Psi(\Omega,v)$ is a Banach space; see \cite{MR924157}. Orlicz spaces generalize the classical Lebesgue spaces, and they are useful since they provide a finer scale of norms in the following sense: if \(\Psi(t)=t^p\log(e-1+t)^q\) for \(p\geq 1\) and \(q>0\), and if \(v\) is any non-vanishing weight, then for any \(\varepsilon>0\) we have
\[
    L^{p+\varepsilon}(\Omega,v)\subsetneq L^{\Psi}(\Omega,v)\subsetneq L^p(\Omega,v).
\]
The strictness of these inclusions follows from the examples in \cite{addie}. One can thus think of the `log-bump' space \(L^\Psi(\Omega,v)\) as an intermediate between Lebesgue spaces.

Just as the space $L^p(\Omega,v)$ has a topological dual which is isomorphic to \(L^\frac{p}{p-1}(\Omega,v)\), one can identify a dual Orlicz space to $L^\Psi(\Omega,v)$. Given a Young function \(\Psi\), the conjugate Young function is defined via the Legendre transform
\[
    \overline{\Psi}(t)=\sup_{s\geq 0}\{st-\Psi(s)\},
\]
and the Orlicz space \(L^{\overline{\Psi}}(\Omega,v)\) is (isomorphic to) the dual of \(L^{\Psi}(\Omega,v)\). This fact can be verified using the properties summarized in the following lemma. For proof of these properties, the reader is referred to \cite{rao-ren91}.

\begin{lem}\label{Young}
If $\Psi$ and $\overline{\Psi}$ are conjugate Young functions, then the following hold:
\begin{itemize}
    \item[(1)] Young's inequality: If \(s\geq0\) and \(t\geq0\) then $st\leq \Psi(s) + \overline{\Psi}(t)$,
    \item[(2)] Conjugate equivalence: If \(t\geq0\) then \(t\leq \Psi^{-1}(t)\overline{\Psi}^{-1}(t) \leq 2t\),
    \item[(3)] H\"older's Inequality: If $f\in L^\Psi(\Omega,v)$ and $g\in L^{\overline{\Psi}}(\Omega,v)$ then
    \[
    \bigg|\int_\Omega fg\;vdx\bigg| \leq 2\left\| f \right\|_{L^\Psi(\Omega,v)} \left\| g\right\|_{L^{\overline{\Psi}}(\Omega,v)}.
    \]
\end{itemize}
\end{lem}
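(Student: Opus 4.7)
My plan is to treat the three parts in order, using part (1) as the foundation for parts (2) and (3).

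For part (1), Young's inequality is immediate from the definition of $\overline{\Psi}$ as a Legendre transform: for any $s,t\geq 0$, the supremum defining $\overline{\Psi}(t)$ dominates the particular value $st-\Psi(s)$, which rearranges to $st\leq \Psi(s)+\overline{\Psi}(t)$.

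For part (2), the upper bound $\Psi^{-1}(t)\overline{\Psi}^{-1}(t)\leq 2t$ follows by applying part (1) with $s=\Psi^{-1}(t)$ and $\overline{\Psi}^{-1}(t)$ in the second slot, giving $\Psi^{-1}(t)\overline{\Psi}^{-1}(t)\leq \Psi(\Psi^{-1}(t))+\overline{\Psi}(\overline{\Psi}^{-1}(t))=2t$. For the lower bound $t\leq \Psi^{-1}(t)\overline{\Psi}^{-1}(t)$ I would argue as follows. Write $b=\overline{\Psi}^{-1}(t)$ and let $\psi^{\leftarrow}$ denote the right-continuous generalized inverse of $\psi$, so that $\overline{\Psi}(s)=\int_0^s\psi^{\leftarrow}(r)\,dr$. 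Monotonicity of $\psi^{\leftarrow}$ gives $t=\overline{\Psi}(b)\leq b\,\psi^{\leftarrow}(b)$, so $\psi^{\leftarrow}(b)\geq t/b$; by the definition of the generalized inverse, this forces $\psi(s)\leq b$ for every $s<t/b$. Integrating, $\Psi(t/b)=\int_0^{t/b}\psi(r)\,dr\leq t=\Psi(\Psi^{-1}(t))$, and strict monotonicity of $\Psi$ on $(0,\infty)$ yields $t/b\leq \Psi^{-1}(t)$, i.e.\ $t\leq \Psi^{-1}(t)\overline{\Psi}^{-1}(t)$.

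For part (3) I will use the standard normalization argument. Fix $\varepsilon>0$ and set $\lambda=\|f\|_{L^\Psi(\Omega,v)}+\varepsilon$, $\mu=\|g\|_{L^{\overline{\Psi}}(\Omega,v)}+\varepsilon$. By definition of the Luxemburg norm and monotonicity of the modular, $\int_\Omega \Psi(|f|/\lambda)v\,dx\leq 1$ and $\int_\Omega \overline{\Psi}(|g|/\mu)v\,dx\leq 1$. Applying part (1) pointwise with $s=|f(x)|/\lambda$ and $t=|g(x)|/\mu$ and then integrating against $v\,dx$, one gets $\left|\int_\Omega fg\,v\,dx\right|\leq \lambda\mu\bigl(\int_\Omega \Psi(|f|/\lambda)v\,dx+\int_\Omega \overline{\Psi}(|g|/\mu)v\,dx\bigr)\leq 2\lambda\mu$. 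Sending $\varepsilon\downarrow 0$ yields the claimed H\"older inequality.

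I expect the main obstacle to be the lower bound in part (2), since unlike the other assertions it is not a direct consequence of Young's inequality and requires carefully manipulating the generalized inverse $\psi^{\leftarrow}$, in particular exploiting right-continuity of $\psi$ to pass from an inequality on $\psi^{\leftarrow}(b)$ back to a uniform bound on $\psi$ below $t/b$. Parts (1) and (3) are essentially formal once the Legendre duality and the Luxemburg norm are in hand.
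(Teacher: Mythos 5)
Your three arguments are correct, but note that the paper itself does not prove Lemma \ref{Young}: it simply defers to the reference \cite{rao-ren91}, so there is no internal proof to compare against. Parts (1) and (3) are exactly the standard arguments (Legendre-transform definition, and the normalization-plus-pointwise-Young argument with the Luxemburg modular), and your handling of the $\varepsilon$-perturbation in (3) correctly avoids the question of whether the modular at $\lambda=\|f\|_{L^\Psi(\Omega,v)}$ itself is at most one. The only place where you lean on an unproved ingredient is the lower bound in (2): your argument uses the representation $\overline{\Psi}(s)=\int_0^s\psi^{\leftarrow}(r)\,dr$ of the conjugate as the integral of the generalized inverse density, which is itself a (true, but nontrivial) fact usually proved in the same references. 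The chain $t=\overline{\Psi}(b)\leq b\,\psi^{\leftarrow}(b)$, hence $\psi\leq b$ on $[0,t/b)$, hence $\Psi(t/b)\leq t$, is sound given that representation. If you want the lemma fully self-contained, you can avoid the density altogether: for $a=\Psi^{-1}(t)$, convexity of $\Psi$ gives $s\frac{\Psi(a)}{a}-\Psi(s)\leq\Psi(a)$ for $s\leq a$ (since $\Psi(s)/s$ is non-decreasing) and $s\frac{\Psi(a)}{a}-\Psi(s)\leq 0$ for $s>a$, so $\overline{\Psi}\bigl(\Psi(a)/a\bigr)\leq\Psi(a)=t$, whence $t/\Psi^{-1}(t)\leq\overline{\Psi}^{-1}(t)$, which is the desired lower bound. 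Either route is fine; yours just inherits one more fact from the standard Orlicz-space toolkit.
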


It is useful to note that like H\"older's inequality, many other well-known properties of the Lebesgue spaces extend to Orlicz spaces in a natural way. One of these is Chebyshev's inequality, which generalizes in the following way when we replace the usual \(L^p\) norm with an Orlicz norm.

\begin{thm}[Chebyshev's Inequality]\label{cheby}
If \(f\in L^\Psi(\Omega,v)\) then for any \(\alpha>0\),
\begin{equation}\label{Markov}
    \Psi^{-1}(v(\{x\in\Omega:|f(x)|\geq\alpha\})^{-1})^{-1}\leq\frac{1}{\alpha}\|f\|_{L^\Psi(\Omega,v)}.
\end{equation}
\end{thm}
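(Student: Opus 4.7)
The plan is to unpack the Luxemburg norm directly and use monotonicity of $\Psi$, together with its continuous inverse. Write $\lambda = \|f\|_{L^\Psi(\Omega,v)}$, which we may assume is positive (else $f=0$ $v$-a.e.\ and the statement is trivial), and set $E_\alpha = \{x\in\Omega : |f(x)|\geq \alpha\}$. Since $\Psi$ is continuous, by definition of the infimum in \eqref{lux} we can pick a sequence $\lambda_n\downarrow\lambda$ with $\int_\Omega \Psi(|f|/\lambda_n)\,v\,dx\leq 1$; monotone convergence then yields
\[
    \int_\Omega \Psi\!\left(\frac{|f(x)|}{\lambda}\right)\,v(x)\,dx \leq 1.
\]

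Next, because $\Psi$ is non-decreasing, on $E_\alpha$ we have $\Psi(|f(x)|/\lambda)\geq \Psi(\alpha/\lambda)$. Restricting the integral above to $E_\alpha$ gives
\[
    \Psi\!\left(\frac{\alpha}{\lambda}\right) v(E_\alpha) \leq \int_{E_\alpha} \Psi\!\left(\frac{|f|}{\lambda}\right) v\,dx \leq 1,
\]
so $\Psi(\alpha/\lambda) \leq v(E_\alpha)^{-1}$. Since $\Psi$ is strictly increasing and continuous on $[0,\infty)$ with $\Psi(0)=0$ and $\Psi(t)\to\infty$, it has a well-defined, strictly increasing inverse $\Psi^{-1}$, and applying it yields $\alpha/\lambda \leq \Psi^{-1}(v(E_\alpha)^{-1})$. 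Rearranging gives exactly
\[
    \frac{1}{\Psi^{-1}\!\left(v(E_\alpha)^{-1}\right)} \leq \frac{\lambda}{\alpha} = \frac{1}{\alpha}\|f\|_{L^\Psi(\Omega,v)},
\]
which is \eqref{Markov}.

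There is essentially no obstacle here; the only subtlety is justifying that the infimum in the Luxemburg norm is attained (or at least that the defining inequality persists at $\lambda=\|f\|_{L^\Psi(\Omega,v)}$), which is handled by continuity of $\Psi$ together with the monotone convergence theorem as noted above. One should also silently assume $v(E_\alpha)>0$; if $v(E_\alpha)=0$ then $v(E_\alpha)^{-1}=\infty$ and the left-hand side of \eqref{Markov} is interpreted as $0$, so the inequality is vacuous.
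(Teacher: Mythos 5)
Your proof is correct and follows essentially the same route as the paper's: convert the level set of $|f|$ into a level set of $\Psi(|f|/\lambda)$ via monotonicity, use that the modular $\int_\Omega\Psi(|f|/\lambda)\,v\,dx\leq 1$ at $\lambda=\|f\|_{L^\Psi(\Omega,v)}$, and rearrange with $\Psi^{-1}$ increasing. The only difference is cosmetic: the paper normalizes $\|f\|_{L^\Psi(\Omega,v)}=1$ and quotes the $L^1$ Chebyshev inequality for $\Psi(|f|)$, whereas you work unnormalized and justify the modular bound explicitly by monotone convergence, a detail the paper leaves implicit.
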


\begin{proof}
Verification of this inequality in \(L^1(\Omega,v)\) is an exercise common to most graduate measure theory courses, and we take for granted that for any \(f\in L^1(\Omega,v)\),
\[
    v(\{x\in\Omega:|f(x)|\geq\alpha\})\leq \frac{1}{\alpha}\int_\Omega|f|vdx.
\]
To establish \eqref{Markov} using the inequality above, we observe that it suffices to prove the estimate when \(\|f\|_{L^\Psi(\Omega,v)}=1\). Since \(\Psi\) is increasing and \(\Psi(|f|)\in L^1(\Omega,v)\), we have
\[
    v(\{x\in \Omega:|f(x)|\geq\alpha\})=v(\{x\in \Omega:\Psi(|f(x)|)\geq\Psi(\alpha)\})\leq\frac{1}{\Psi(\alpha)}\int_\Omega\Psi(|f|)vdx.
\]
Owing to our assumption that \(\|f\|_{L^\Psi(\Omega,v)}=1\), we find that the integral above is at most one. Consequently we have \(v(\{x\in \Omega:|f(x)|\geq\alpha\})\leq \Psi(\alpha)^{-1}\), and using the fact that \(\Psi^{-1}\) is increasing it is straightforward to rearrange and obtain \eqref{Markov}.
\end{proof}

Using the properties of Orlicz norms established above, one can show that if \(\Psi\) is a Young function and if \(\chi_S\) denotes the indicator of a measurable set \(S\subseteq\Omega\), then
\begin{equation}\label{characteristic}
    \|\chi_S\|_{L^\Psi(\Omega,v)}=\Psi^{-1}((v(S)^{-1})^{-1}.
\end{equation}
The final property of Orlicz norms which we require in the sections which follow is a generalized version of H\"older's inequality.

\begin{lem}\label{orliczinterp}
Let \(\Phi\), \(\Psi_1\), and \(\Psi_2\) be Young functions and assume  for each \(t\geq0\) that \(t\geq \Phi(\Psi_1^{-1}(t)\Psi_2^{-1}(t))\). If \(f\in L^{\Psi_1}(\Omega,v)\) and \(g\in L^{\Psi_2}(\Omega,v)\), then 
\[
    \|fg\|_{L^\Phi(\Omega,v)}\leq 2\|f\|_{L^{\Psi_1}(\Omega,v)}\|g\|_{L^{\Psi_2}(\Omega,v)}.
\]
\end{lem}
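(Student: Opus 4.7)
The plan is to reduce to the case of unit norms, establish a pointwise product inequality between $\Phi(|fg|)$ and $\Psi_1(|f|) + \Psi_2(|g|)$, and then use convexity of $\Phi$ to absorb a factor of $2$.

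First I would assume without loss of generality that $\alpha := \|f\|_{L^{\Psi_1}(\Omega,v)}$ and $\beta := \|g\|_{L^{\Psi_2}(\Omega,v)}$ are both strictly positive, since otherwise $f$ or $g$ vanishes $v$-a.e.\ and the inequality is trivial. Because each $\Psi_i$ is the integral of a density that is strictly positive on $(0,\infty)$, each $\Psi_i$ is strictly increasing and continuous on $[0,\infty)$, so the inverse $\Psi_i^{-1}:[0,\infty)\to[0,\infty)$ is a well-defined non-decreasing bijection.

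Next I would establish the pointwise inequality
\[
    \Phi(|f(x)g(x)|) \leq \Psi_1(|f(x)|) + \Psi_2(|g(x)|)
\]
for a.e.\ $x\in\Omega$. Fixing $x$ and writing $s = \Psi_1(|f(x)|)$, $t = \Psi_2(|g(x)|)$, so that $|f(x)| = \Psi_1^{-1}(s)$ and $|g(x)| = \Psi_2^{-1}(t)$, monotonicity of $\Psi_1^{-1}$ and $\Psi_2^{-1}$ gives $|f(x)||g(x)| \leq \Psi_1^{-1}(s+t)\Psi_2^{-1}(s+t)$. Applying $\Phi$ and invoking the standing hypothesis $\Phi(\Psi_1^{-1}(r)\Psi_2^{-1}(r))\leq r$ with $r=s+t$ yields exactly the claimed pointwise bound.

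Now I would replace $f$ by $f/\alpha$ and $g$ by $g/\beta$ in the pointwise inequality and integrate against $v\,dx$. This yields
\[
    \int_\Omega \Phi\!\left(\frac{|fg|}{\alpha\beta}\right)v\,dx \leq \int_\Omega \Psi_1\!\left(\frac{|f|}{\alpha}\right)v\,dx + \int_\Omega \Psi_2\!\left(\frac{|g|}{\beta}\right)v\,dx \leq 2,
\]
where the final inequality is the standard Luxemburg-norm estimate $\int_\Omega \Psi_i(|h|/\|h\|_{L^{\Psi_i}(\Omega,v)})\,v\,dx \leq 1$, valid because $\Psi_i$ is continuous.

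Finally, I would use convexity of $\Phi$ together with $\Phi(0)=0$, which gives $\Phi(t/2)\leq \Phi(t)/2$ for all $t\geq 0$, to conclude
\[
    \int_\Omega \Phi\!\left(\frac{|fg|}{2\alpha\beta}\right)v\,dx \leq \frac{1}{2}\int_\Omega \Phi\!\left(\frac{|fg|}{\alpha\beta}\right)v\,dx \leq 1,
\]
which by definition of the Luxemburg norm \eqref{lux} gives $\|fg\|_{L^\Phi(\Omega,v)} \leq 2\alpha\beta$, as desired. The only subtlety I expect is the one handled in the first paragraph, namely confirming that $\Psi_i^{-1}$ is a genuine inverse so that the substitution $s=\Psi_1(|f|)$, $t=\Psi_2(|g|)$ is legitimate; everything else is bookkeeping with monotonicity, the hypothesis, and convexity.
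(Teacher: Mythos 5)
Your proof is correct and takes essentially the same route as the paper's: reduce to unit (or normalized) norms, bound the modular $\int_\Omega\Phi(|fg|)\,v\,dx$ by $2$, then use convexity of $\Phi$ and dehomogenize. The only difference is at the pointwise step, where you apply the hypothesis at $r=\Psi_1(|f|)+\Psi_2(|g|)$ to get $\Phi(|fg|)\leq\Psi_1(|f|)+\Psi_2(|g|)$ directly, while the paper splits $\Omega$ according to which of $\Psi_1(|f|)$, $\Psi_2(|g|)$ is larger and applies the hypothesis at their maximum; both give the same bound of $2$ after integrating.
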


\begin{proof}
Once again, we observe that it suffices to prove the claimed estimate when 
\[
    \|f\|_{L^{\Psi_1}(\Omega,v)}=\|g\|_{L^{\Psi_2}(\Omega,v)}=1.
\]
Define \(\Omega_1=\{x\in\Omega:\Psi_1(|f(x)|)\geq \Psi_2(|g(x)|)\}\) and set \(\Omega_2=\Omega\setminus\Omega_1\). If \(x\in\Omega_1\) then 
\[
    \Phi(|fg|)\leq \Phi(\Psi_1^{-1}(\Psi_1(|f|))\Psi_2^{-1}(\Psi_1(|f|)))\leq\Psi_1(|f|),
\]
while if \(x\in\Omega_2\) then \(\Phi(|fg|)\leq \Psi_2(|g|)\). Since \(\Omega\subseteq\Omega_1\cup\Omega_2\) and \(\|f\|_{\Psi_1}=\|g\|_{\Psi_2}=1\), 
\[
    \int_\Omega\Phi(|fg|)vdx\leq\int_{\Omega_1}\Psi_1(|f|)vdx+\int_{\Omega_2}\Psi_2(|g|)vdx\leq  2.
\]
Finally, as \(\Phi\) is a Young function it is convex, meaning that \(\|fg\|_\Phi\leq 2\) by \eqref{lux} since
\[
    \int_\Omega\Phi\bigg(\frac{|fg|}{2}\bigg)vdx\leq \frac{1}{2}\int_\Omega\Phi(|fg|)vdx\leq 1.
\]
The claimed norm inequality then follows by dehomogenizing.
\end{proof}

Generalizing the proof above, one can show that if \(\Psi_1^{-1}\cdots \Psi_m^{-1}\leq \Phi^{-1}\) pointwise, and if \(f_j\in L^{\Psi_j}(\Omega,v)\) for \(j=1,\dots,m\), then
\[
    \|f_1\cdots f_m\|_{L^\Phi(\Omega,v)}\leq m\|f_1\|_{L^{\Psi_1}(\Omega,v)}\cdots \|f_m\|_{L^{\Psi_m}(\Omega,v)}.
\]
Orlicz spaces are well-studied, and they enjoy many more properties analogous to the Lebesgue spaces. For our purposes though, the brief outline given above suffices.

\subsection{Sobolev Spaces and Test Functions}\; Fix $1\leq p<\infty$ and let \(Q\) be an \(n\times n\) symmetric, non-negative definite matrix-valued function with pointwise operator norm $\omega$.


The \(Q\)-weighted vector-valued Lebesgue space \(\mathcal{L}^p(\Omega,Q)\) is defined as the set of all functions \(\vec{g}:\Omega\rightarrow\mathbb{R}^n\) for which the norm
\[
    \|\vec g\|_{\mathcal{L}^p(\Omega,Q)}=\bigg(\int_\Omega|\sqrt{Q}~\vec{g}|^pdx\bigg)^\frac{1}{p}<\infty.
\]
Identifying any two measurable $\mathbb{R}^n$-valued functions $\vec{f}$ and $\vec{g}$ if $\|\vec{f}-\vec{g}\|_{\mathcal{L}^p(\Omega,Q)}=0$, it is established in \cite{CRR2} that $\mathcal{L}^p(\Omega,Q)$ is a Banach space. Using this norm, the Sobolev space \(QH^{1,p}_0(\Omega,v)\) is defined as the completion of \(\mathrm{Lip}_0(\Omega)\) with respect to the norm
\begin{align}\label{sobonormnew}
    \|u\|_{QH^{1,p}(\Omega,v)}=\|u\|_{L^p(\Omega,v)}+\|\nabla u\|_{\mathcal{L}^p(\Omega,Q)}.
\end{align}
In other words, \(QH^{1,p}(\Omega,v)\) is the collection of ordered pairs \((u,\vec{g})\in L^p(\Omega,v)\times \mathcal{L}^p(\Omega,Q)\) for which there exists a sequence \(\{u_j\}\) of Lipschitz functions with compact support in $\Omega$ that satisfies
\begin{equation}\label{limid}
    \lim_{j\rightarrow\infty}\|u-u_j\|_{L^p(\Omega,v)}=0\qquad\mathrm{and}\qquad \lim_{j\rightarrow\infty}\|\vec{g}-\nabla u_j\|_{\mathcal{L}^p(\Omega,Q)}=0.
\end{equation}
The space \(QH^{1,p}(\Omega,v)\) is defined analogously via completion of the collection of $\mathrm{Lip}(\Omega)$ functions with respect to \eqref{sobonormnew}.

In some settings, owing to degeneracy of \(Q\), the function \(\vec{g}\) need not be uniquely determined by \(u\). An example of this behaviour is given in \cite{FKS}. Nevertheless, we often abuse notation by writing \(\nabla u\) in place of \(\vec{g}\) and referring to \(\nabla u\) as a formal gradient of \(u\), but we caution the reader that this identification is made only in the sense of \eqref{limid}. More, to simplify notation, we often write \(u\in QH^{1,p}_0(\Omega,v)\) in place of $(u,\nabla u)\in QH^{1,p}_0(\Omega,v) $.




\subsubsection{Weak Solutions}\; Now we give a definition of weak solution of the Dirichlet problem \eqref{mainprob}.  The conditions required on the data function $f$ and non-negative function $\tau$ ensure that the integrals in the definition are finite.


\begin{defn}\label{weaksol} 
Let $Q$ and the weight $v$ satisfy \eqref{weightcond} and \eqref{sobolev}. Given a function $f\in L^{(p\sigma)'}(\Omega,v)$ and non-negative $\tau\in L^{\sigma'}(\Omega,v)$, a pair $(u,\vec{g})\in QH^{1,p}_0(\Omega,v)$ is called a weak solution to \eqref{mainprob} if the identity 
\begin{align}\label{weaks}
    \int_\Omega |\sqrt{Q}~\vec{g} |^{p-2} \nabla \varphi \cdot Q~\vec{g}~dx + \int_\Omega \tau \left|u \right|^{p-2}u\varphi ~vdx = \int_\Omega f\varphi ~vdx.
\end{align}
holds for every $\varphi \in \mathrm{Lip}_0(\Omega)$.
\end{defn}

Our notion of weak solution is well-defined since the integrals in \eqref{weaks} converge by H\"older's inequality and our assumptions on \(f\), \(\tau\), \(u\), and \(\vec{g}\). Using a simple density argument together with Fatou's lemma, we can expand our test functions from \(\mathrm{Lip}_0(\Omega)\) to include all member pairs of $QH^{1,p}_0(\Omega,v)$. That is, given any pair $(w,\vec{h})\in QH^{1,p}_0(\Omega,v)$ one has that
\begin{align}\label{weaksgen}
    \int_\Omega \left|\sqrt{Q}~\vec{g}\right|^{p-2} \vec{h} \cdot Q\vec{g}~dx + \int_\Omega  \tau\left|u \right|^{p-2}u w ~vdx = \int_\Omega f w ~vdx.
\end{align}

\subsection{Technical Results} In Sections \ref{mainproof} and \ref{expintsec}, given a weak solution $u\in QH^{1,p}_0(\Omega,v)$ we will use test functions \(\varphi_\alpha = (u-\alpha)_+\) and \(w=e^{\alpha u}-1\) for arbitrary \(\alpha>0\). The following technical lemmas demonstrate that these are valid test functions that can be used in \eqref{weaksgen}. 

\begin{lem}\label{testfunction}
Let $Q$ be as above, let inequality \eqref{weightcond} hold, and fix $\alpha>0$. Given a pair $(u,\nabla u )\in QH^{1,p}_0(\Omega,v)$, set $\varphi_\alpha  = (u-\alpha)_+$. Then \((\varphi_\alpha,\chi_{\{u\geq\alpha\}}\nabla u ) \in QH^{1,p}_0(\Omega,v)\).
\end{lem}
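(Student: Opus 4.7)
The plan is to build an approximating sequence for $\varphi_\alpha$ by applying the positive-part truncation to the Lipschitz approximations of $u$. By the definition of $QH^{1,p}_0(\Omega,v)$ and \eqref{limid}, there is a sequence $\{u_j\}\subset\mathrm{Lip}_0(\Omega)$ with $u_j\to u$ in $L^p(\Omega,v)$ and $\nabla u_j\to \nabla u$ in $\mathcal{L}^p(\Omega,Q)$. After passing to a subsequence I may also assume $u_j\to u$ a.e.\ on $\Omega$. Setting $\varphi_{\alpha,j}=(u_j-\alpha)_+$, each $\varphi_{\alpha,j}$ is Lipschitz on $\Omega$; since $\alpha>0$ and $u_j$ vanishes outside a compact subset of $\Omega$, in fact $\varphi_{\alpha,j}\in\mathrm{Lip}_0(\Omega)$, and its classical a.e.\ gradient is $\nabla\varphi_{\alpha,j}=\chi_{\{u_j>\alpha\}}\nabla u_j$.

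For the $L^p$ convergence, the map $s\mapsto (s-\alpha)_+$ is $1$-Lipschitz, which yields $|\varphi_\alpha - \varphi_{\alpha,j}|\leq |u-u_j|$ pointwise and hence $\|\varphi_\alpha-\varphi_{\alpha,j}\|_{L^p(\Omega,v)}\to 0$. For convergence of gradients in $\mathcal{L}^p(\Omega,Q)$ I would split
\[
\|\chi_{\{u\geq\alpha\}}\nabla u - \chi_{\{u_j>\alpha\}}\nabla u_j\|_{\mathcal{L}^p(\Omega,Q)}\leq \|\chi_{\{u_j>\alpha\}}(\nabla u - \nabla u_j)\|_{\mathcal{L}^p(\Omega,Q)}+\|(\chi_{\{u\geq\alpha\}}-\chi_{\{u_j>\alpha\}})\nabla u\|_{\mathcal{L}^p(\Omega,Q)}.
\]
The first term is dominated by $\|\nabla u-\nabla u_j\|_{\mathcal{L}^p(\Omega,Q)}\to 0$. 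For the second, a.e.\ convergence $u_j\to u$ gives $\chi_{\{u_j>\alpha\}}\to\chi_{\{u\geq\alpha\}}$ a.e.\ on $\{u<\alpha\}\cup\{u>\alpha\}$, so dominated convergence (with integrable dominator $|\sqrt{Q}\nabla u|^p$) handles that portion of $\Omega$.

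The main obstacle is the contribution from the level set $\{u=\alpha\}$, where the indicators $\chi_{\{u_j>\alpha\}}$ may fail to converge to $\chi_{\{u\geq\alpha\}}=1$. To close the argument I would establish the Stampacchia-type fact that $\sqrt{Q}\,\nabla u=0$ a.e.\ on $\{u=\alpha\}$ whenever $(u,\nabla u)\in QH^{1,p}_0(\Omega,v)$ (a result available in the matrix-weighted Sobolev literature cited above, e.g.\ \cite{CRR}). A direct route is to smooth the positive part by a family of non-decreasing $C^1$ functions $\eta_\varepsilon$ approximating $(t)_+$, with $\eta_\varepsilon'=0$ on $(-\infty,0]$ and $\eta_\varepsilon'\nearrow \chi_{(0,\infty)}$ as $\varepsilon\to 0$; then $\eta_\varepsilon(u_j-\alpha)$ is Lipschitz with exact gradient $\eta_\varepsilon'(u_j-\alpha)\nabla u_j$, and comparing the iterated limits $j\to\infty$ followed by $\varepsilon\to 0$ forces $\sqrt{Q}\,\nabla u$ to vanish on $\{u=\alpha\}$ while identifying $\chi_{\{u\geq\alpha\}}\nabla u$ as a formal gradient of $\varphi_\alpha$. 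Once this vanishing is in hand, the level-set contribution to the second term disappears in the limit and the claimed membership follows.
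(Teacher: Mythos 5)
Your construction is the same as the paper's: truncate the Lipschitz approximants, obtain $L^p(\Omega,v)$ convergence from the $1$-Lipschitz character of $s\mapsto(s-\alpha)_+$, and split the gradient difference into a term controlled by $\|\nabla u-\nabla u_j\|_{\mathcal{L}^p(\Omega,Q)}$ and a term handled by dominated convergence. You go further than the paper in flagging the level set $\{u=\alpha\}$: the paper simply asserts that $\chi_{\{u_j\geq\alpha\}}\to\chi_{\{u\geq\alpha\}}$ pointwise $v$-a.e., which is unjustified exactly where $u=\alpha$, so your diagnosis of where the difficulty sits is accurate.

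However, the fix you propose is not secured. In this degenerate setting the formal gradient is not determined by $u$ (the paper cites \cite{FKS} for precisely this non-uniqueness), so the classical Stampacchia fact ``$\sqrt{Q}\,\nabla u=0$ a.e.\ on $\{u=\alpha\}$'' cannot simply be quoted for an arbitrary pair $(u,\nabla u)\in QH^{1,p}_0(\Omega,v)$: it is a statement about the particular representative $\vec g$, and pairs such as $(0,\vec g)$ with $\vec g\neq 0$ show that level sets of $u$ and the behaviour of a formal gradient are not linked in general. Moreover, your smoothing argument with $\eta_\varepsilon$ does not ``force'' the vanishing: for fixed $\varepsilon$, letting $j\to\infty$ gives $(\eta_\varepsilon(u-\alpha),\eta_\varepsilon'(u-\alpha)\nabla u)\in QH^{1,p}_0(\Omega,v)$, and then letting $\varepsilon\to0$ and using completeness of the space yields $(\varphi_\alpha,\chi_{\{u>\alpha\}}\nabla u)\in QH^{1,p}_0(\Omega,v)$, the open-level-set version; but since one function may possess several formal gradients, comparing this pair with the desired one produces no contradiction and hence no information about $\sqrt{Q}\,\nabla u$ on $\{u=\alpha\}$. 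So as written your argument establishes membership of $(\varphi_\alpha,\chi_{\{u>\alpha\}}\nabla u)$ rather than of $(\varphi_\alpha,\chi_{\{u\geq\alpha\}}\nabla u)$ as stated. That weaker version is all that the De Giorgi iteration in Section \ref{mainproof} actually needs, and the paper's own proof is open to the same objection on $\{u=\alpha\}$, but the step you rely on to bridge the two indicators is a genuine gap in your proposal.
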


\begin{proof} 
Recalling the definition of \(QH^{1,p}_0(\Omega,v)\), we see that it is enough to identify a sequence $\{\psi_j\}$ in $\mathrm{Lip}_0(\Omega)$ for which
\[
    \lim_{j\rightarrow\infty}\|\psi_j-\varphi_\alpha\|_{L^p(\Omega,v)} = 0 \qquad\textrm{and}\qquad \lim_{j\rightarrow\infty}\|\nabla \psi_j -\chi_{\{u\geq\alpha\}}\nabla u \|_{\mathcal{L}^p(\Omega,Q)}=0.
\]
Since $u\in QH^{1,p}_0(\Omega,v)$, there is a sequence $\{u_j\}$ of \(\mathrm{Lip}_0(\Omega)\) functions that converge to $u$ in $L^p(\Omega,v)$, and whose gradients converge to the formal gradient $\nabla u $ in $\mathcal{L}^p(\Omega,Q)$. Passing to a subsequence if necessary, we may assume that $u_j\rightarrow u$ \(v\)-a.e. and that $\sqrt{Q}\nabla u_j \rightarrow\sqrt{Q}\nabla u $ a.e. in $\Omega$. For each $j\in\mathbb{N}$, define $\psi_j = (u_j-\alpha)_+$ so that pointwise $v$-a.e. we have $\psi_j\rightarrow \varphi_\alpha$. In addition,
\[
    | \psi_j-\varphi_\alpha|^p\leq |u-u_j|^p\leq 2^{p-1}(|u|^p+|u_j|^p)
\]
holds \(v\)-a.e., and as the right-hand side converges pointwise $v$-a.e. to \(2^p|u|^p\in L^1(\Omega,v)\) it follows from the dominated convergence theorem that $\psi_j\rightarrow \varphi_\alpha$ in $L^p(\Omega,v)$ also.

To see the convergence of gradients, we note that \(\nabla\psi_j=\chi_{\{u_j\geq \alpha\}}\nabla u_j\) for each \(j\in\mathbb{N}\) by \cite[Lemma 7.6]{gt}, and that $\chi_{\{u_j\geq\alpha\}}\rightarrow \chi_{\{u\geq\alpha\}}$ pointwise $v$-a.e. in \(\Omega\). Since
\[
    |\nabla \psi_i-\chi_{\{u\geq\alpha\}}\nabla u |\leq |(\chi_{\{u_j\geq\alpha\}}- \chi_{\{u\geq\alpha\}})\nabla u  | + |\chi_{\{u_j\geq\alpha\}}(\nabla u_j -\nabla u )|,
\]
another use of dominated convergence shows that \(\nabla \psi_i-\chi_{\{u\geq\alpha\}}\nabla u\) converges to zero in $\mathcal{L}^p(\Omega,Q)$, meaning that $(\varphi_\alpha,\nabla u \chi_{S_\alpha})\in QH^{1,p}_0(\Omega,v)$ as claimed.
\end{proof}

\begin{pro}\label{positiveapproximator}
Let $(u,\nabla u)\in QH^{1,p}_0(\Omega,v)$ with $u\geq 0$ $v$-a.e. in $\Omega$, and assume that \(u\) is essentially bounded. Then there exists a sequence $\{u_j\}$ of $\mathrm{Lip}_0(\Omega)$ functions satisfying the following criteria:
\begin{enumerate}
    \item $0\leq u_j(x) \leq \|u\|_{L^\infty(\Omega,v)}+1$ $v$-a.e. in $\Omega$.
    \item $u_j\rightarrow u$ both pointwise $v$-a.e. in \(\Omega\) and in $L^p(\Omega,v)$.
    \item $\nabla u_j \rightarrow \nabla u $ in $\mathcal{L}^p(\Omega,Q)$ and $\sqrt{Q}\nabla u_j \rightarrow \sqrt{Q}\nabla u $ $v$-a.e. in $\Omega$.
    \item $\|\nabla u_j\|_{\mathcal{L}^p(\Omega,Q)}\leq \| \nabla u \|_{\mathcal{L}^p(\Omega,Q)}+1$ for all $j\in\mathbb{N}$.
\end{enumerate}
\end{pro}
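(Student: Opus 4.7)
The plan is to take an approximating sequence $\{w_j\} \subset \mathrm{Lip}_0(\Omega)$ for $(u, \nabla u)$ in the $QH^{1,p}_0(\Omega,v)$-norm, as guaranteed by \eqref{limid}, pass to a subsequence so that $w_j \to u$ and $\sqrt{Q}\nabla w_j \to \sqrt{Q}\nabla u$ pointwise $v$-a.e.\ in $\Omega$, and then truncate. With $M := \|u\|_{L^\infty(\Omega,v)} + 1$, let $T_M(t) := \max\{0, \min\{t, M\}\}$ and define $u_j := T_M \circ w_j$. Since $T_M$ is a $1$-Lipschitz contraction with $T_M(0) = 0$, each $u_j$ belongs to $\mathrm{Lip}_0(\Omega)$ (inheriting compact support from $w_j$), and condition (1) holds by construction. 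Condition (2) is immediate: since $0 \leq u \leq M - 1 < M$ holds $v$-a.e., we have $T_M(u) = u$ $v$-a.e., so the contraction estimate $|u_j - u| \leq |w_j - u|$ yields both $L^p(\Omega, v)$-convergence and pointwise $v$-a.e.\ convergence along our chosen subsequence.

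The heart of the argument is condition (3). The Lipschitz chain rule (cf.\ \cite[Lemma 7.6]{gt}) yields $\nabla u_j = \chi_{\{0 < w_j < M\}} \nabla w_j$, so I would split
\[
    \sqrt{Q}(\nabla u_j - \nabla u) = \chi_{\{0 < w_j < M\}}\,\sqrt{Q}(\nabla w_j - \nabla u) + (\chi_{\{0 < w_j < M\}} - 1)\,\sqrt{Q}\nabla u.
\]
The first term has $\mathcal{L}^p(\Omega,Q)$-norm bounded by $\|\nabla w_j - \nabla u\|_{\mathcal{L}^p(\Omega,Q)} \to 0$. For the second, I would invoke dominated convergence with envelope $|\sqrt{Q}\nabla u|^p \in L^1(\Omega, dx)$, reducing the matter to pointwise $v$-a.e.\ vanishing of the integrand. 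On $\{0 < u < M\}$ this is automatic, because $w_j \to u$ a.e.\ forces $0 < w_j < M$ eventually and hence $\chi_{\{0 < w_j < M\}} \to 1$ there. The complement decomposes into $\{u \geq M\}$, which has $v$-measure zero by the choice of $M$, and $\{u = 0\}$, on which the factor $\sqrt{Q}\nabla u$ must vanish $v$-a.e.; granted this, a.e.\ convergence of $\sqrt{Q}\nabla u_j$ to $\sqrt{Q}\nabla u$ also drops out, since on $\{u = 0\}$ one has $|\sqrt{Q}\nabla u_j| \leq |\sqrt{Q}\nabla w_j| \to 0$ a.e. Finally, (4) is a soft consequence of the pointwise bound $|\sqrt{Q}\nabla u_j| \leq |\sqrt{Q}\nabla w_j|$: this yields $\|\nabla u_j\|_{\mathcal{L}^p(\Omega,Q)} \leq \|\nabla w_j\|_{\mathcal{L}^p(\Omega,Q)}$, and since the right-hand side tends to $\|\nabla u\|_{\mathcal{L}^p(\Omega,Q)}$, discarding finitely many initial indices produces the $+1$ margin.

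The main obstacle is the Stampacchia-type vanishing $\sqrt{Q}\nabla u = 0$ $v$-a.e.\ on $\{u = 0\}$ needed to close the dominated convergence argument in (3). This is non-trivial in the degenerate matrix-weighted setting precisely because, as noted after \eqref{limid}, the formal gradient is not uniquely determined by $u$ (cf.\ the example in \cite{FKS}). To justify it, I plan to apply the same truncation procedure with $w_j$ replaced by $w_j^+ := \max(w_j, 0)$, which still converges to $u = u^+$ in $L^p(\Omega, v)$; extract a weak $\mathcal{L}^p(\Omega,Q)$-limit of $\chi_{\{w_j > 0\}}\sqrt{Q}\nabla w_j$ along a subsequence, and use Mazur's lemma to upgrade to a strong limit, producing a formal gradient of $u$ that agrees with $\sqrt{Q}\nabla u$ on $\{u > 0\}$ while vanishing on $\{u = 0\}$. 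Equivalently, the identity may be extracted from the structural results for $QH^{1,p}_0(\Omega, v)$ established in \cite{CRR, CRR2}.
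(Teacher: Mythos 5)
Your overall strategy is the same as the paper's: take a $\mathrm{Lip}_0(\Omega)$ sequence approximating $(u,\nabla u)$ in the sense of \eqref{limid}, pass to an a.e.\ convergent subsequence, and truncate. The difference is the truncation itself: you use the hard two-sided cutoff $T_M(t)=\max\{0,\min\{t,M\}\}$, whereas the paper first sets $w_j=|\psi_j|$ (so that $|\sqrt{Q}\nabla w_j|=|\sqrt{Q}\nabla\psi_j|$ and no gradient mass is discarded where the approximants are negative) and then composes with a $C^1$ cutoff $\phi$ that is the identity below $\|u\|_{L^\infty(\Omega,v)}$, deferring the verification of (1)--(4) to \cite[Lemma 2.15]{DCU-SR20}. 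Your handling of (1), (2), (4) and of the first term in your splitting is fine; a small point you should make explicit is that the dominated convergence in (3) is with respect to $dx$ while $w_j\to u$ only $v$-a.e., which is bridged by \eqref{weightcond}: on the exceptional set one has $v=0$ a.e.\ $(dx)$, hence $Q=0$ and $\sqrt{Q}\nabla u=0$ there.

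The genuine gap is the Stampacchia-type identity $\sqrt{Q}\nabla u=0$ a.e.\ on $\{u=0\}$, which your cutoff at height zero forces you to prove, and which your proposed justification does not deliver. The Mazur-lemma construction applied to $w_j^+$ can at best produce \emph{some} pair $(u,\vec h)\in QH^{1,p}_0(\Omega,v)$ whose gradient agrees with $\nabla u$ on $\{u>0\}$ and vanishes on $\{u=0\}$; but precisely because formal gradients are not unique in this degenerate setting (the point you yourself invoke), the existence of such an $\vec h$ says nothing about the \emph{given} $\nabla u$ appearing in the statement and in condition (3). Indeed the identity is false in general: FKS-type examples \cite{FKS} give pairs $(0,\vec g)\in QH^{1,p}_0(\Omega,v)$ with $\sqrt{Q}\vec g\neq 0$, i.e.\ $u\equiv 0$ while the gradient does not vanish on $\{u=0\}$, so it also cannot be ``extracted from the structural results'' of \cite{CRR,CRR2}. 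For such a pair your second term $(\chi_{\{0<w_j<M\}}-1)\sqrt{Q}\nabla u$ need not tend to zero, and your argument for (3) breaks down even though the proposition's conclusion may still hold via a different choice of approximants. To close the proof you must avoid killing the gradient at height zero altogether — this is exactly what the paper's use of the absolute value $w_j=|\psi_j|$, rather than the positive part, is designed to do, with only a smooth upper cutoff applied afterwards.
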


\begin{proof}
Let $\{\psi_j\}$ be a sequence of $\mathrm{Lip}_0(\Omega)$ functions that converge to $u$ in the norm of $QH^{1,p}_0(\Omega,v)$. Passing to subsequences if necessary, we may assume that $\psi_j\rightarrow u$ holds pointwise $v$-a.e. and in $L^p(\Omega,v)$, that $\nabla \psi_j\rightarrow \nabla u $ in $\mathcal{L}^p(\Omega,Q)$ and $\sqrt{Q}\nabla \psi_j\rightarrow \sqrt{Q}\nabla u $ pointwise a.e. in $\Omega$, and finally that $\|\nabla \psi_j\|_{\mathcal{L}^p(\Omega,Q)}\leq \| \nabla u \|_{\mathcal{L}^p(\Omega,Q)}+1$ for each $j\in\mathbb{N}$.

Following \cite{DCU-SR20}, we set $w_j=|\psi_j|$ so that \(|w_j-u|\leq |\psi_j-u|\) pointwise \(v\)-a.e. in \(\Omega\), from which it follows that $w_j\rightarrow u$ pointwise \(v\)-a.e. and in  $L^p(\Omega,v)$. Since $w_j$ is Lipschitz, it is differentiable \(v\)-a.e. by the Rademacher-Stepanov Theorem, and \(v\)-a.e. we also have \(\nabla w_j = \mathrm{sgn}(\psi_j)\nabla \psi_j\). This gives $|\sqrt{Q}\nabla w_j| = |\sqrt{Q}\nabla \psi_j|$ a.e., and we see that the sequence $\{w_j\}$ inherits the properties of \(\{\psi_j\}\) outlined above.

We construct the sequence $\{u_j\}$ as in \cite[Lemma 2.15]{DCU-SR20} by setting $M = \|u\|_{L^\infty(\Omega,v)}$ and letting $\phi\in C^1([0,\infty)$ be a non-negative increasing function for which \(\phi(t)=t\) when \(t\leq M-\frac{1}{2}\), \(\phi(t)=M\) for \(t\geq M+1\), and \(0\leq \phi'(t)\leq 1\) globally. Taking \(u_j = \phi(w_j)\), it is easy to verify that (1)-(4) hold, as is done in \cite[Lemma 2.15]{DCU-SR20}.
\end{proof}

\begin{lem}\label{testcomp}
Let \(u\in QH_0^{1,p}(\Omega,v)\) be non-negative and essentially bounded. Given \(\phi\in C^1(\mathbb{R})\) such that \(\phi(0)=0\), we have \((\phi(u),\phi'(u)\nabla u)\in  QH_0^{1,p}(\Omega)\).
\end{lem}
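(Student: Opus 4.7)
The plan is to compose with $\phi$ the approximating sequence from Proposition \ref{positiveapproximator}. Let $\{u_j\}\subset\mathrm{Lip}_0(\Omega)$ be the sequence furnished by that proposition for $u$. Since each $u_j$ is Lipschitz with compact support it maps $\Omega$ into a compact interval, on which the $C^1$ function $\phi$ is Lipschitz, so $\phi\circ u_j$ is Lipschitz on $\Omega$; because $\phi(0)=0$ and $u_j$ vanishes outside a compact set, so does $\phi(u_j)$. Thus $\phi(u_j)\in\mathrm{Lip}_0(\Omega)$, and the chain rule for Lipschitz compositions (see \cite[Lemma 7.5]{gt}) yields $\nabla(\phi\circ u_j)=\phi'(u_j)\nabla u_j$ almost everywhere. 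It remains to verify the two convergences of \eqref{limid}.

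Set $M=\|u\|_{L^\infty(\Omega,v)}$ and $L=\sup_{t\in[0,M+1]}|\phi'(t)|$, which is finite by continuity of $\phi'$. Property (1) of Proposition \ref{positiveapproximator} places both $u_j$ and $u$ in $[0,M+1]$ $v$-almost everywhere, so the mean value theorem gives $|\phi(u_j)-\phi(u)|\leq L|u_j-u|$ pointwise $v$-a.e. Combined with property (2), this delivers $\phi(u_j)\to\phi(u)$ in $L^p(\Omega,v)$. For the gradient convergence in $\mathcal{L}^p(\Omega,Q)$, decompose
\begin{align*}
    \sqrt{Q}\bigl(\phi'(u_j)\nabla u_j - \phi'(u)\nabla u\bigr) = \phi'(u_j)\sqrt{Q}(\nabla u_j-\nabla u) + \bigl(\phi'(u_j)-\phi'(u)\bigr)\sqrt{Q}\nabla u.
\end{align*}
The first term is bounded pointwise Lebesgue-a.e.\ by $L|\sqrt{Q}(\nabla u_j-\nabla u)|$ (noting that on $\{v=0\}$ the weight condition \eqref{weightcond} forces $\sqrt{Q}=0$, so both sides vanish), hence it tends to zero in $L^p(\Omega,dx)$ by property (3) of the proposition. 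For the second term, property (2) and continuity of $\phi'$ give $\phi'(u_j)\to\phi'(u)$ pointwise $v$-a.e., and again on $\{v=0\}$ we have $\sqrt{Q}\nabla u=0$, so the difference tends to zero pointwise a.e.\ Lebesgue. Since it is dominated by $2L|\sqrt{Q}\nabla u|\in L^p(\Omega,dx)$, dominated convergence completes the argument.

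The main subtlety is reconciling the $v$-almost everywhere convergence supplied by Proposition \ref{positiveapproximator} with the Lebesgue-almost everywhere convergence required in $\mathcal{L}^p(\Omega,Q)$, since the ambient measures differ; this is resolved cleanly by the compatibility condition \eqref{weightcond}, which forces $Q$, and hence $\sqrt{Q}\nabla u$, to vanish on the null set of $v$.
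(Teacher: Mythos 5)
Your proposal is correct and follows essentially the same route as the paper: compose the approximating sequence from Proposition \ref{positiveapproximator} with $\phi$ and verify the two convergences in \eqref{limid}, the only differences being bookkeeping (you use the mean value theorem and a fixed dominant $2L|\sqrt{Q}\nabla u|$ where the paper uses the bound $|\phi(t)|\leq C|t|$ together with a generalized dominated convergence argument). Your explicit reconciliation of $v$-a.e.\ versus Lebesgue-a.e.\ convergence via \eqref{weightcond} is a careful touch that the paper leaves implicit.
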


\begin{proof}
By Proposition \ref{positiveapproximator}, we may select a sequence $\{u_k\}$ of non-negative bounded $\mathrm{Lip}_0(\Omega)$ functions which converge to \(u\) in the norm of $QH^{1,p}_0(\Omega,v)$, and which satisfy properties (1) through (4). It is clear that $\phi(u_k)\in \mathrm{Lip}_0(\Omega)$ for each $k$ as $\phi\in C^1(\mathbb{R})$. Moreover, the continuity of \(\phi\) ensures that \(\phi(u_k)\) converges to \(\phi(u)\) $v$-a.e. in \(\Omega\).

Let \(M\) be such that \(u_j\leq M\) for all \(j\). Since \(\phi'\) is continuous it is bounded on \([0,M]\), and as \(\phi(0)=0\) there exists \(C>0\) such that \(|\phi(t)|=|\phi(t)-\phi(0)|\leq C|t|\) for \(t\leq M\). Therefore
\[
    |\phi(u_k)-\phi(u)|^p\leq 2^{p-1}(|\phi(u_k)|^p+|\phi(u)|^p)\leq 2^{p-1}C^p(|u_k|^p+|u|^p),
\]  
and since the right-hand side converges to \(2^pC^p|u|^p\) in \(L^1(\Omega,v)\), the dominated convergence theorem implies that \(\phi(u_k)\rightarrow\phi(u)\) in \(L^p(\Omega,v)\). The gradient converges in a similar manner; we have \(\sqrt{Q}\nabla\phi(u_k)=\phi'(u_k)\sqrt{Q}\nabla u_k\) almost everywhere, and continuity of \(\phi'\) thus ensures that \(\sqrt{Q}\nabla\phi(u_k)\rightarrow\phi'(u)\sqrt{Q}\nabla u\) pointwise a.e. in \(\Omega\). Additionally we have
\[
    |\sqrt{Q}\nabla\phi(u_k)-\phi'(u)\sqrt{Q}\nabla u|^p\leq 2^{p-1}C^p(|\sqrt{Q}\nabla u_k|^p+|\sqrt{Q}\nabla u|^p),
\]
and once again the right-hand side converges to \(2^pC^p|\sqrt{Q}\nabla u|^p\) in \(\mathcal{L}^1(\Omega,dx)\). The dominated convergence theorem thus lets us conclude that \(\sqrt{Q}\nabla\phi(u)\) converges to \(\phi'(u)\sqrt{Q}\nabla u\) in \(L^p(\Omega,v)\), giving \((\phi(u),\nabla\phi(u))\in QH_0^{1,p}(\Omega,v)\) as required.
\end{proof}

As a last note to this section, we show that under the hypotheses of Theorem \ref{main}, the weak formulation \eqref{weaks} of problem \eqref{mainprob} is well-defined. Recall, we assume that \(f\in L^\Gamma(\Omega,v)\) where \(\Gamma\) is a Young function for which \(t^{-\sigma'}\Gamma(t)\) is non-decreasing on \([0,\infty)\) and \eqref{datacond} holds. In contrast, \eqref{weaks} is well defined only under the assumption \(f\in L^{(p\sigma)'}(\Omega,v)\). We now show that the former condition implies the latter.

\begin{lem}
Suppose $v(\Omega)<\infty$, $1<p<\infty$, and let $\sigma,v$ be as in \eqref{weightcond} and \eqref{sobolev}. Let \(\Gamma\) be a Young function satisfying \eqref{datacond}. Then, \(L^\Gamma(\Omega,v)\subseteq L^{(p\sigma)'}(\Omega,v)\).  
\end{lem}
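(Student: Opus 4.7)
The plan is to exploit two facts: first, when $p>1$ and $\sigma>1$ one has $(p\sigma)'<\sigma'$; second, the standing monotonicity hypothesis on $\Gamma$ (as recalled in the paragraph immediately preceding the lemma) forces $\Gamma(t)$ to dominate a multiple of $t^{\sigma'}$ for $t\geq 1$. Combined with $v(\Omega)<\infty$, these will yield the embedding by a direct splitting argument.

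First I would record the elementary inequality $(p\sigma)'<\sigma'$. Writing the conjugate exponents explicitly, this reduces to $p\sigma/(p\sigma-1)<\sigma/(\sigma-1)$, which after rearrangement is equivalent to $p>1$, hence holds. Next, from the standing assumption that $t\mapsto t^{-\sigma'}\Gamma(t)$ is non-decreasing on $[0,\infty)$, I would conclude that $\Gamma(t)\geq \Gamma(1)\,t^{\sigma'}$ for every $t\geq 1$. Here $\Gamma(1)>0$ because $\Gamma$ is the integral of a density $\psi$ that vanishes only at $0$, so the integral over $[0,1]$ is strictly positive.

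Now take $f\in L^\Gamma(\Omega,v)$. The case $f=0$ being trivial, I set $\lambda=\|f\|_{L^\Gamma(\Omega,v)}>0$; since $\Gamma$ is continuous, the infimum defining the Luxemburg norm is attained, so $\int_\Omega \Gamma(|f|/\lambda)\,vdx\leq 1$. I would split $\Omega$ into $E_1=\{|f|\leq\lambda\}$ and $E_2=\{|f|>\lambda\}$. On $E_1$, the integrand $|f|^{(p\sigma)'}$ is bounded by $\lambda^{(p\sigma)'}$, which integrates to the finite quantity $\lambda^{(p\sigma)'}v(\Omega)$ precisely because $v(\Omega)<\infty$. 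On $E_2$, where $|f|/\lambda>1$, the inequality $(p\sigma)'<\sigma'$ gives $(|f|/\lambda)^{(p\sigma)'}\leq(|f|/\lambda)^{\sigma'}$, and the growth bound from the previous step then yields $(|f|/\lambda)^{(p\sigma)'}\leq \Gamma(|f|/\lambda)/\Gamma(1)$. Integrating over $E_2$ against $v$ and using $\int_\Omega \Gamma(|f|/\lambda)\,vdx\leq 1$ produces a bound of $\lambda^{(p\sigma)'}/\Gamma(1)$. Summing the two contributions finishes the proof.

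There is no genuine obstacle here; the argument is a routine combination of Luxemburg-norm bookkeeping with the observation that $\Gamma$ majorizes $t^{\sigma'}$ at infinity. The only point to keep straight is the direction of the exponent inequality $(p\sigma)'<\sigma'$, which is what allows one to replace $|f|^{(p\sigma)'}$ by $|f|^{\sigma'}$ precisely on the set where $|f|>\lambda$ and thereby trade the $L^{(p\sigma)'}$ integrand for the integrable quantity $\Gamma(|f|/\lambda)$. Note that the stronger condition \eqref{datacond} is not needed for this lemma; only the monotonicity of $t^{-\sigma'}\Gamma(t)$ and the finiteness of $v(\Omega)$ are used.
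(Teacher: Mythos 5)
Your argument is internally sound step by step, but it does not prove the lemma as stated: the only hypothesis on $\Gamma$ in the statement is \eqref{datacond}, and your proof never uses \eqref{datacond} at all, importing instead the monotonicity of $t^{-\sigma'}\Gamma(t)$, which is a hypothesis of Theorem \ref{main} but is deliberately absent from the lemma. The two conditions are not comparable. For instance, a Young function that is linear on $[0,2]$ and then continues convexly with rapid growth satisfies \eqref{datacond}, yet $t^{-\sigma'}\Gamma(t)$ is decreasing near the origin and the pointwise bound $\Gamma(t)\geq\Gamma(1)t^{\sigma'}$ fails on part of $(1,2]$, so your key inequality $(|f|/\lambda)^{\sigma'}\leq\Gamma(|f|/\lambda)/\Gamma(1)$ is unavailable there. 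The crux of the paper's proof is exactly the step you bypass: extracting from the integral condition \eqref{datacond} alone a power-growth lower bound, namely $t^{(p\sigma)'}\leq C\Gamma(t)$ for $t\geq1$, via a contradiction argument that uses $\Gamma(t)\leq t\Gamma'(t)$ and integration over dyadic intervals. (Such a bound, even with exponent $\sigma'$, does follow from \eqref{datacond}, but proving it is the substance of the lemma, not a remark to be dispensed with; your closing sentence, which declares \eqref{datacond} unnecessary, has the logic reversed relative to the stated hypotheses.) Once a bound of this type is in hand, the Luxemburg-norm splitting you perform is essentially the same bookkeeping as the paper's, which splits at the level $C\|f\|_{L^\Gamma(\Omega,v)}$ where you split at $\|f\|_{L^\Gamma(\Omega,v)}$.

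That said, in the setting where the lemma is actually applied, namely under the standing hypotheses of Theorem \ref{main} recalled in the sentence preceding the lemma, your route is legitimate and simpler: monotonicity of $t^{-\sigma'}\Gamma(t)$ gives $\Gamma(t)\geq\Gamma(1)t^{\sigma'}$ for $t\geq1$, and together with $v(\Omega)<\infty$ and the elementary inequality $(p\sigma)'<\sigma'$ it even yields the stronger chain $L^\Gamma(\Omega,v)\subseteq L^{\sigma'}(\Omega,v)\subseteq L^{(p\sigma)'}(\Omega,v)$. To repair the proposal as a proof of the lemma as written, replace your growth bound by one derived from \eqref{datacond}, as in the paper's argument with $q(t)=t^{(p\sigma)'}/\Gamma(t)$.
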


\begin{proof}
First we let \(q\) be the non-negative function for which \(q(t)\Gamma(t)=t^{(p\sigma)'}\), and we show that \(q\) must be bounded for \(t\geq 1\). Assume to the contrary that \(q(t)\rightarrow\infty\) as \(t\rightarrow\infty\), and observe that since \(\Gamma(t)\leq t\Gamma'(t)\) we have
\[
    \int_1^\infty \frac{\Gamma'(t)}{\Gamma(t)}\left(\frac{t}{\Gamma(t)^\frac{1}{\sigma'}}\right)^\frac{1}{p-1}dt\geq \int_1^\infty \frac{1}{t}\left(\frac{t}{\Gamma(t)^\frac{1}{\sigma'}}\right)^\frac{1}{p-1}dt=\int_1^\infty t^{\frac{1}{p\sigma-1}-1}q(t)^\frac{\sigma-1}{\sigma(p-1)}dt.
\]
For any \(k\geq 1\), it follows from the inequality \(p\sigma>1\) and the estimate above that 
\[
    \int_1^\infty \frac{\Gamma'(t)}{\Gamma(t)}\left(\frac{t}{\Gamma(t)^\frac{1}{\sigma'}}\right)^\frac{1}{p-1}dt\geq \int_k^{2k} t^{\frac{1}{p\sigma-1}-1}q(t)^\frac{\sigma-1}{\sigma(p-1)}dt\geq \log 2\bigg(\min_{t\geq k}q(t)^\frac{\sigma-1}{\sigma(p-1)}\bigg).
\]
Taking a limit in \(k\), we find that
\[
    \int_1^\infty \frac{\Gamma'(t)}{\Gamma(t)}\left(\frac{t}{\Gamma(t)^\frac{1}{\sigma'}}\right)^\frac{1}{p-1}dt\geq\log 2\bigg(\liminf_{k\rightarrow\infty}q(k)^\frac{\sigma-1}{\sigma(p-1)}\bigg)=\infty,
\]
which contradicts our assumption that the integral on the left-hand side converges. We conclude that \(q(t)\leq C\) for \(t\geq 1\), meaning that \(t^{(p\sigma)'}\leq C\Gamma(t)\) for \(t\geq 1\). For later convenience, we replace \(C\) with \(\max\{1,C\}\) to assume that \(C\geq 1\). 

Taking \(\lambda=C\max\{2,2v(\Omega)\}^\frac{1}{(p\sigma)'}\|f\|_\Gamma\) and making a straightforward estimate gives
\[
    \int_\Omega \bigg(\frac{|f|}{\lambda}\bigg)^{(p\sigma)'}vdx\leq\frac{v(\Omega\cap \{|f|\leq C\|f\|_\Gamma\})}{2v(\Omega)}+\frac{1}{2}\int_{\Omega\cap\{|f|>C\|f\|_\Gamma\}} \bigg(\frac{|f|}{C\|f\|_\Gamma}\bigg)^{(p\sigma)'}vdx.
\]
Thanks to the estimate \(t^{(p\sigma)'}\leq C\Gamma(t)\) for \(t\geq 1\), we can observe by convexity that
\[
    \int_{\Omega\cap\{|f|>C\|f\|_\Gamma\}} \bigg(\frac{|f|}{C\|f\|_\Gamma}\bigg)^{(p\sigma)'}vdx\leq C\int_{\Omega} \Gamma\bigg(\frac{|f|}{C\|f\|_\Gamma}\bigg)vdx\leq \int_{\Omega} \Gamma\bigg(\frac{|f|}{\|f\|_\Gamma}\bigg)vdx\leq 1.
\]
For \(\lambda\) as above then, we find that \(\|f\|_{(p\sigma)'}\leq\lambda\) since
\[
    \int_\Omega \bigg(\frac{|f|}{\lambda}\bigg)^{(p\sigma)'}vdx\leq\frac{v(\Omega\cap \{|f|\leq C\|f\|_\Gamma\})}{2v(\Omega)}+\frac{1}{2}\leq 1.
\]
If follows that if \(f\in L^\Gamma(\Omega,v)\) and if \(v(\Omega)<\infty\), then \(f\in L^{(p\sigma)'}(\Omega,v)\) as claimed.
\end{proof}

If \(\Gamma\) is as above and if \(v\) is essentially constant on \(\Omega\), the preceding lemma and \cite[Proposition 4.4]{CRR} together imply that \eqref{mainprob} has a weak solution in \(QH_0^{1,p}(\Omega,v)\). Existence of weak solutions to a similar class of homogeneous Neumann problems is also discussed in \cite[Chapter 4]{CRR2}, under more general hypotheses on the weight \(v\).

\section{Proof of Theorem \ref{main}}\label{mainproof}

Following standard convention, we use \(C\) to denote a constant which may change from line to line. Fix a Young function $\Gamma$ satisfying \eqref{datacond} for which \(t^{-\sigma'}\Gamma(t)\) is non-decreasing, let $u\in QH^{1,p}(\Omega,v)$ be a non-negative weak solution to \eqref{mainprob}, and assume without loss of generality that \(\Gamma(1)=1\) and \(v(\Omega)=1\). For any constant $\alpha>0$ we define $\varphi_\alpha = (u-\alpha)_+$, and we show that \(v_\alpha=v(\{x\in\Omega:u(x)\geq\alpha\})=0\) when \(\alpha\) is sufficiently large. From this identity it will follow that \(\|u\|_{L^\infty(\Omega,v)}\leq\alpha\).

For any \(\beta>\alpha\geq 0\), we first estimate $v_\beta$ in terms of $v_\alpha$ to obtain a recursion in the measures of level sets. Using Theorem \ref{cheby} and H\"older's inequality, we find that
\begin{equation}\label{cheb}
    (\beta-\alpha)v_\beta\leq \int_\Omega\varphi_\alpha vdx\leq \|\varphi_\alpha\|_{L^{p\sigma}(\Omega,v)}v_\alpha^{1-\frac{1}{p\sigma}}.    
\end{equation}
Our aim is to estimate $\|\varphi_\alpha\|_{L^{p\sigma}(\Omega,v)}$ in terms of \(v_\alpha\). Since $\varphi_\alpha\in QH^{1,p}_0(\Omega,v)$ by Lemma \ref{testfunction}, we can use it as a test function in \eqref{weaksol} to get
\[
    \int_\Omega|\sqrt{Q}\nabla u|^{p-2}\sqrt{Q}\nabla u\cdot\sqrt{Q}\nabla\varphi_\alpha dx+\int_\Omega\tau |u|^{p-2}u\varphi_\alpha vdx=\int_\Omega f\varphi_\alpha vdx.
\]
The definition of \(\varphi_\alpha\) gives \(|\sqrt{Q}\nabla u|^{p-2}\sqrt{Q}\nabla u\cdot\sqrt{Q}\nabla\varphi_\alpha=|\sqrt{Q}\nabla\varphi_\alpha|^p\), and moreover the second integral above is non-negative. It follows from \eqref{sobolev} that
\[
    \|\varphi_\alpha\|_{L^{p\sigma}(\Omega,v)}^p\leq C\|\sqrt{Q}\nabla\varphi_\alpha\|_{L^p(\Omega,dx)}^p\leq C\int_\Omega f\varphi_\alpha vdx.
\]

To proceed, we estimate the integral on the right-hand side above by employing H\"older's inequality once again to get
\[
    \int_\Omega f\varphi_\alpha vdx\leq 2\|f\|_{L^\Gamma(\Omega,v)}\|\varphi_\alpha\|_{L^{\overline{\Gamma}}(\Omega,v)}.
\]
If \(\Psi\) is the Young function that satisfies \(\Psi^{-1}(t)=\overline{\Gamma}^{-1}(t)t^{-\frac{1}{p\sigma}}\) for \(t\geq0\), then Lemma \ref{orliczinterp} gives us \(\|\varphi_\alpha\|_{L^{\overline{\Gamma}}(\Omega,v)}\leq 2\|\varphi_\alpha\|_{L^{p\sigma}(\Omega,v)}\|\chi_\alpha\|_{L^\Psi(\Omega,v)},\) where \(\chi_\alpha\) is the indicator of the set \(\{x\in\Omega:u(x)\geq\alpha\}\). From \eqref{characteristic} and item \textit{(2)} of Lemma \ref{Young} we also have
\[
     \|\chi_\alpha\|_{L^\Psi(\Omega,v)}=\Psi^{-1}(v_\alpha^{-1})^{-1}\leq  v_\alpha^{1-\frac{1}{p\sigma}}\Gamma^{-1}(v_\alpha^{-1}),
\]
meaning that
\[
    \int_\Omega f\varphi_\alpha vdx\leq 4\|f\|_{L^\Gamma(\Omega,v)}\|\varphi_\alpha\|_{L^{p\sigma}(\Omega,v)}v_\alpha^{1-\frac{1}{p\sigma}}\Gamma^{-1}(v_\alpha^{-1}).
\]
Combining our estimates involving \(f\) and \(\|\varphi_\alpha\|_{L^{p\sigma}(\Omega,v)}\) and simplifying now gives
\[
    \|\varphi_\alpha\|_{L^{p\sigma}(\Omega,v)}\leq C\|f\|_{L^\Gamma(\Omega,v)}^\frac{1}{p-1}v_\alpha^{\frac{p\sigma-1}{p\sigma(p-1)}}\Gamma^{-1}(v_\alpha^{-1})^\frac{1}{p-1}.
\]

Using \eqref{cheb} and the estimate above, we obtain the desired recursion in measure:
\[
    (\beta-\alpha)v_\beta\leq C\|f\|_{L^\Gamma(\Omega,v)}^\frac{1}{p-1}v_\alpha^{\frac{p\sigma-1}{p\sigma -\sigma}}\Gamma^{-1}(v_\alpha^{-1})^\frac{1}{p-1}.
\]
To simplify, we replace \(u\) with \(\Tilde{u}=u/C\rho \|f\|_{L^\Gamma(\Omega,v)}^\frac{1}{p-1}\) for \(C\) as above and \(\rho>1\) to get
\[
    (\beta-\alpha)v_\beta\leq \frac{1}{\rho}v_\alpha^{\frac{p\sigma-1}{p\sigma -\sigma}}\Gamma^{-1}(v_\alpha^{-1})^\frac{1}{p-1}.
\]
Exploiting this recursion, we show now that \(\Tilde{u}\) is essentially bounded.

For a monotone increasing sequence \(\{s_k\}\) to be chosen momentarily, we define \(v_k=\{x\in\Omega:u(x)\geq s_k\}\) so that the preceding inequality yields
\[
    (s_{k+1}-s_k)v_{k+1}\leq \rho^{-1}v_k^{\frac{p\sigma-1}{p\sigma -\sigma}}\Gamma^{-1}(v_k^{-1})^\frac{1}{p-1}.
\]
To construct \(\{s_k\}\) we set \(s_0=0\) and note that \(v_0\leq v(\Omega)=1\). Given \(s_k\) we let
\[
    s_{k+1}=s_k+v_k^{\frac{1}{(p-1)\sigma'}}\Gamma^{-1}(v_k^{-1})^\frac{1}{p-1}
\]
so that \(v_{k+1}\leq \rho^{-1}v_k\) and \(v_k\leq \rho^{-k}\). It follows that \(v_k\rightarrow0\) as \(k\rightarrow\infty\), and so
\[
    \|\Tilde{u}\|_{L^\infty(\Omega,v)}\leq \lim_{k\rightarrow\infty }s_k=\sum_{k=0}^\infty v_k^{\frac{1}{(p-1)\sigma'}}\Gamma^{-1}(v_k^{-1})^\frac{1}{p-1}.
\]
Since \(t^{-\sigma'}\Gamma(t)\) is non-decreasing, it is easy to show that \(t^\frac{1}{\sigma'}\Gamma^{-1}(t^{-1})\) is also non-decreasing, whereby we conclude from the estimate \(v_k\leq \rho^{-k}\) that
\[
    \|\Tilde{u}\|_{L^\infty(\Omega,v)}\leq \sum_{k=0}^\infty \rho^{-\frac{k}{(p-1)\sigma'}}\Gamma^{-1}(\rho^k)^\frac{1}{p-1}\leq 1+\int_0^\infty\rho^{-\frac{t}{(p-1)\sigma'}}\Gamma^{-1}(\rho^t)^\frac{1}{p-1}dt.
\]
Making the change of variables \(\rho(t)=\Gamma(s)\) and using condition \eqref{datacond}, we see that
\[
    \|\Tilde{u}\|_{L^\infty(\Omega,v)}\leq 1+\frac{1}{\log(\rho)}\int_1^\infty\frac{\Gamma'(s)}{\Gamma(s)}\bigg(\frac{s}{\Gamma(s)^{\frac{1}{\sigma'}}}\bigg)^\frac{1}{p-1}ds\leq C.
\]
It follows that \(\|u\|_{L^\infty(\Omega,v)}\leq C\|f\|_{L^\Gamma(\Omega,v)}^\frac{1}{p-1}\), as we wished to show.\hfill\(\Box\)

\section{Exponential Integrability}\label{expintsec}

In this section we study exponential integrability of weak solutions to the Dirichlet problem \eqref{mainprob} when $\tau=0$. That is, if \(u\) is a weak solution to the degenerate Poisson-type problem
\begin{equation}\label{notau}
    \left\{\begin{aligned}
        -\frac{1}{v(x)}\mathrm{div}(|\sqrt{Q(x)}\nabla u|^{p-2}Q(x)\nabla u(x))=\;&f(x)\quad &&x\in\Omega,\\
        \hfill u(x)=&\;0 \qquad &&x\in\partial\Omega,
    \end{aligned}\right.
\end{equation}
then  \(\|e^{\gamma u}\|_{L^1(\Omega,v)}\leq Cv(\Omega)\) when $v(\Omega)<\infty$ and \(\gamma\) is a small constant small enough. Our approach is modelled on \cite[Section 2]{DCU-SR20}, and we begin with the following norm estimate.

\begin{lem}\label{aux}
Let \(u\) be a non-negative bounded weak solution to \eqref{notau}, and for constants \(\xi>0\) and \(\alpha>0\) let \(w=e^{\xi u}-1\) and \(w_\alpha=(w-\alpha)_+\). Then
\begin{equation}\label{first}
    \|\sqrt{Q}\nabla w_\alpha\|_{L^p(\Omega,dx)}^p\leq \xi^{p-1}\int_\Omega |f|w_\alpha(w+1)^{p-1}vdx.
\end{equation}
\end{lem}

\begin{proof}
From Lemma \ref{testcomp} it follows that \(\psi_\alpha=((w+1)^p-(\alpha+1)^p)_+\in QH_0^1(\Omega,v)\), meaning that we can use \(\psi_\alpha\) in the weak formulation of \eqref{notau}. Further, observe that \(\sqrt{Q}\nabla\psi_\alpha=\chi_\alpha p\xi (w+1)^{p}\sqrt{Q}\nabla u\), where \(\chi_\alpha\) denotes the indicator function of the support of \(w_\alpha\). Consequently we have
\[
    \int_\Omega|\sqrt{Q}\nabla u|^{p-2}\sqrt{Q}\nabla u\cdot\sqrt{Q}\nabla\psi_\alpha dx=\frac{p}{\xi^{p-1}}\int_\Omega|\xi(w+1)\sqrt{Q}\nabla u|^p\chi_\alpha dx,
\]
and the integral on the right-hand side is exactly \(\|\sqrt{Q}\nabla w_\alpha\|_{L^p(\Omega,dx)}^p\).
Combining the identity above with the weak formulation of \eqref{notau} then, we get
\[
    \|\sqrt{Q}\nabla w_\alpha\|_{L^p(\Omega,dx)}^p=\frac{\xi^{p-1}}{p}\int_\Omega f\psi_\alpha vdx.
\]
Inequality \eqref{first} follows from the pointwise estimate \(\psi_\alpha\leq pw_\alpha(w+1)^{p-1}\) for \(p\geq 1\).
\end{proof}

\begin{thm}
Let \(f\in L^{\sigma'}(\Omega,v)\), and let \(u\in QH_0^{1,p}(\Omega,v)\) be a bounded weak solution to \eqref{notau}. Then for \(\xi\) sufficiently small we have
\begin{equation}\label{xpbound}
    \|e^{\xi u}\chi_\alpha\|_{L^{p\sigma}(\Omega,v)}\leq \frac{(1+\alpha)v(\{e^{\xi u}\geq 1+\alpha\})^\frac{1}{p\sigma}}{1-C\xi\|f\|_{L^{\sigma'}(\Omega,v)}^\frac{1}{p-1}}.
\end{equation}
\end{thm}

\begin{proof}
Fixing \(\alpha\geq 0\) and applying Lemma \ref{aux} together with H\"older's inequality gives
\[
    \|\sqrt{Q}\nabla w_\alpha\|^p_{L^p(\Omega,dx)}\leq \xi^{p-1}\| w_\alpha\|_{L^{p\sigma}(\Omega,v)}\|f\|_{L^{\sigma'}(\Omega,v)}\bigg(\int_\Omega(w+1)^{p\sigma}\chi_\alpha vdx\bigg)^\frac{p-1}{p\sigma},
\]
and from \eqref{sobolev} it follows that \(\|w_\alpha\|_{L^{p\sigma}(\Omega,v)}\leq C\|\sqrt{Q}\nabla w_\alpha\|_{L^{p}(\Omega,dx)}\). Combining these inequalities and rearranging, we see that
\[
   \|w_\alpha\|_{L^{p\sigma}(\Omega,v)}\leq  \xi C\|f\chi_\alpha\|_{L^{\sigma'}(\Omega,v)}^\frac{1}{p-1}\|e^{\xi u}\chi_\alpha\|_{L^{p\sigma}(\Omega,v)}.    
\]
Additionally, since \(\|e^{\xi u}\chi_\alpha\|_{L^{p\sigma}(\Omega,v)}=\|w_\alpha+(1+\alpha)\chi_\alpha\|_{L^{p\sigma}(\Omega,v)}\), Minkowski's inequality and the preceding estimate together give
\[
    \|e^{\xi u}\chi_\alpha\|_{L^{p\sigma}(\Omega,v)}\leq \xi C\|f\|_{L^{\sigma'}(\Omega,v)}^\frac{1}{p-1}\|e^{\xi u}\chi_\alpha\|_{L^{p\sigma}(\Omega,v)}+(1+\alpha)v(\{w\geq\alpha\})^\frac{1}{p\sigma}.
\]
For \(\xi\) small (namely, \(\xi <1/C\|f\|_{L^{\sigma'}(\Omega,v)}^\frac{1}{p-1}\)), we can rearrange to obtain \eqref{xpbound}.
\end{proof}

Since the condstants in the above are independent of $\alpha>0$, we may allow $\alpha\rightarrow 0^+$ to obtain Theorem \ref{expint} as a corollary.



\bibliographystyle{plain}
\bibliography{references}
\end{document}